\let\cite\parencite
\newcommand\reallywidehat[1]{%
\savestack{\tmpbox}{\stretchto{%
  \scaleto{%
    \scalerel*[\widthof{\ensuremath{#1}}]{\kern.1pt\mathchar"0362\kern.1pt}%
    {\rule{0ex}{\textheight}}%
  }{\textheight}%
}{2.4ex}}%
\stackon[-6.9pt]{#1}{\tmpbox}%
}
\newtheorem{lemma}{Lemma}[section]
\newtheorem{proposition}[lemma]{Proposition}
\newtheorem{theorem}[lemma]{Theorem}
\newtheorem{corollary}[lemma]{Corollary}
\theoremstyle{definition}
\newtheorem{definition}[lemma]{Definition}
\newtheorem{example}[lemma]{Example}
\newtheorem{question}[lemma]{Question}
\newcommand\cW{\mathcal{W}}
\newcommand\cG{\mathcal{G}}
\newcommand\cA{\mathcal{A}}
\newcommand\R{\mathbb{R}}
\newcommand\C{\mathbb{C}}
\newcommand\Sym{\mathfrak{S}}
\newcommand\V{\mathrm{V}_{\Omega}}
\newcommand\Vq{\mathrm{V}_{q}}
\newcommand\Iq{\mathrm{I}_{q}}
\newcommand\IA{\mathrm{I}_{\cA}}
\newcommand\Rq{R_q}
\newcommand\Sq{S_q}
\newcommand\thetaq{\theta_q}
    \def\ps@copyright{\ps@empty
    \def\@oddfoot{\hfil\small\copyright 2026, Madelyn Andersen}}
\title{Kernel zero-sets, quantum graph ideals, and Hadamard graphons}
\author{Madelyn Andersen}
\begin{document}

\maketitle

\begin{abstract}
Graphons are symmetric measurable functions that arise as limiting objects of dense graph sequences. A kernel variety is a zero-set cut out by homomorphism-density equations $t(g,W)=0$, where $g$ is a quantum graph. We study these zero-sets using the complex algebra of unlabeled quantum graphs and the normalized homomorphism-density polynomial map to the invariant ring of a finite weighted target graph. For any fixed ambient class of bounded kernels, the quantum graphs that vanish on a given subset form an ideal. This gives a Zariski-type topology on kernel zero-sets and a Nullstellensatz-style statement after passing through the invariant polynomial ring. We also compute the corresponding ideals for Hadamard graphons using an explicit probability formula.
\end{abstract}


\section{Introduction}\label{section: Intro}

Graphons naturally arise as limiting objects for convergent sequences of graphs \cite{glasscock2015graphon, glasscock2016graphon, hombook, BORGS20081801}. There have been several studies examining the theoretical properties of graphons in recent years. A natural line of inquiry is when sequences of graphs converge trivially and when they converge to a non-zero limiting object; this consideration was studied by Borgs, Chayes, Cohn, and Holden in \cite{borgs2018sparse}. Graphons may be considered as symmetric measurable functions on probability spaces \cite{hombook, borgs2010left}. From this probabilistic perspective, they may also be used to study $\sigma$-finite measure spaces and compact metric spaces; for more details we refer the reader to \cite{janson2016graphons, glasscock2015graphon, borgs2010left, borgs2018sparse}.

Graphons also appear in a variety of applications. Graphons are used in network classification and training algorithms; see \cite{sabanayagam2021graphon, hu2021training, maskey2021transferability, xu2021learning, ruiz2020graphon, ruiz2021graph}. A graphon Fourier transform has applications to signal processing; see \cite{2021, ghandehari2021noncommutative}. Graphon mean field games are used to model interacting particle systems; see \cite{bayraktar2021graphon, bayraktar2021graphons, gao2021lqg, aurell2021stochastic, bassino2021random, gao2021, athreya2019graphonvalued, vasal2020sequential}.

In comparison to the probabilistic study and algorithmic applications of graphons, the algebraic study of graphons is still relatively unexplored; some discussion of the current state of the algebraic study is in \cite{hombook}. The semigroup structure of the set of all graphons is discussed in \cite{2021nagy}. In this paper, we further address the algebraic study of graphons by connecting graphons to familiar objects from algebraic geometry, namely zero loci, ideals, and polynomial varieties \cite{hartshorne}.

We begin by introducing kernels, graphons, homomorphism density, and quantum graphs in \cref{section: BG}. We then define the normalized homomorphism-density polynomial representation in \cref{section: Cat}. The normalization is important: homomorphism density satisfies $t(K_0,W)=t(K_1,W)=1$ for every kernel $W$, so the polynomial representation used for kernel zero-sets should send $K_0-K_1$ to $0$. In \cref{section: Var and Ideals}, we introduce kernel zero-sets and their vanishing ideals. In \cref{section: Res}, we prove that these vanishing sets define a Zariski-type topology and prove a Nullstellensatz-style result through the invariant polynomial ring. Finally, in \cref{section: Had}, we apply this framework to Hadamard graphons.

\section{Background}\label{section: BG}

Although graphons originally arose as limiting objects for convergent sequences of graphs \cite{hombook, glasscock2015graphon, glasscock2016graphon, BORGS20081801}, we use the following analytic definition.

\begin{definition}\label{definition:kernel}[\cite{hombook}]
Let $\cW$ be the space of all bounded symmetric measurable functions $W:[0,1]^2\to\R$. We call the elements of $\cW$ \textit{kernels}. Let $\cW_0$ denote the set of all kernels $W\in\cW$ such that $0\leq W\leq 1$. We call the elements of $\cW_0$ \textit{graphons}.
\end{definition}

Graphons only exist non-trivially as limits of sufficiently dense graph sequences. For a deeper understanding of what it means for a graph to be sufficiently dense, we refer the reader to \cite{BORGS20081801, borgs2018sparse}.

In order to use an algebraic approach to study graphons, we introduce homomorphism density. When considering two finite graphs, the homomorphism density is the probability that a randomly chosen map between the vertex sets preserves edge adjacency; see \cite{hombook, BORGS20081801}. We extend this notion to kernels and loopless multigraphs as follows.

\begin{definition}\label{definition:hom density}
For every $W\in\cW$ and every loopless multigraph $F=(V,E)$, define the homomorphism density
\begin{align*}
   t(F,W)=\int_{[0,1]^V}\prod_{ij\in E}W(x_i,x_j)\prod_{i\in V}dx_i.
\end{align*}
\end{definition}

We further generalize multigraphs into formal linear combinations.

\begin{definition}\label{definition:quantum graph}
A \textit{quantum graph} is a formal linear combination of finitely many loopless unweighted multigraphs with complex coefficients. Let
\[
\cA:=\C\cG_0
\]
denote the complex vector space spanned by finite unlabeled loopless multigraphs. The multigraphs that occur with nonzero coefficient are called the constituents of the quantum graph. Thus every $g\in\cA$ can be written as
\[
    g=\sum_{i=1}^n\alpha_iF_i,
\]
where $\alpha_i\in\C$ and each $F_i$ is a loopless multigraph.
\end{definition}

We use lowercase letters such as $g$ and $h$ for quantum graphs and capital letters such as $F$ and $G$ for ordinary multigraphs. Homomorphism density extends linearly to quantum graphs:
\[
    t\!\left(\sum_i\alpha_iF_i,W\right)=\sum_i\alpha_i t(F_i,W).
\]
The product on $\cA$ is induced by disjoint union:
\[
    FG:=F\sqcup G,
\]
and then extended bilinearly. With this multiplication, $\cA$ is a commutative $\C$-algebra whose unit is the empty graph $K_0$.

\section{The polynomial representation}\label{section: Cat}

We now recall the polynomial representation method for quantum graphs. This method is based on the homomorphism-polynomial construction of \cite{SCHRIJVER2009502, hombook}, but we use a normalized version because the zero-sets in this paper are defined by homomorphism \emph{densities}.

Fix $q\geq1$. Let
\[
    \Sq:=\C[x_{ij}:1\leq i\leq j\leq q]
\]
be the full polynomial ring in the entries of a symmetric $q\times q$ matrix $X=(x_{ij})$. The symmetric group $\Sym_q$ acts on $\Sq$ by simultaneously permuting the indices. Let
\[
    \Rq:=\Sq^{\Sym_q}
\]
be the invariant ring.

\begin{definition}\label{definition: theta q}
For a finite loopless multigraph $F$, define
\[
\thetaq(F)
=
q^{-|V(F)|}
\sum_{\varphi:V(F)\to[q]}
\prod_{uv\in E(F)}x_{\varphi(u)\varphi(v)}.
\]
Extend $\thetaq$ linearly to a map
\[
\thetaq:\cA\to\Rq.
\]
We call $\thetaq(g)$ the \textit{normalized homomorphism-density polynomial} of $g$ with respect to $q$.
\end{definition}

If $W_X$ is the equal-$q$-step kernel whose block value on the $(i,j)$ block is $x_{ij}$, then
\[
    t(F,W_X)=\thetaq(F)(X).
\]
This is why the normalized map is the appropriate polynomial representation for homomorphism-density zero-sets. For example,
\[
    \thetaq(K_0)=1,\qquad \thetaq(K_1)=1,
\]
so $\thetaq(K_0-K_1)=0$, matching the identity $t(K_0-K_1,W)=0$ for every kernel $W$. This is why the unnormalized homomorphism-number polynomial is not used as the main ideal map in this paper: it sends $K_0-K_1$ to the nonzero constant $1-q$ when $q>1$.

\begin{proposition}\label{proposition: theta homomorphism}
For every fixed $q\geq1$, the map
\[
    \thetaq:\cA\to\Rq
\]
is a surjective $\C$-algebra homomorphism. Hence
\[
    \cA/\ker\thetaq\cong \Rq.
\]
\end{proposition}

\begin{proof}
For ordinary multigraphs $F$ and $G$, the product in $\cA$ is disjoint union. Therefore
\begin{align*}
\thetaq(FG)
&=
q^{-|V(F)|-|V(G)|}
\sum_{\varphi:V(F)\sqcup V(G)\to[q]}
\prod_{uv\in E(F)\cup E(G)}
 x_{\varphi(u)\varphi(v)}\\
&=\left(
q^{-|V(F)|}
\sum_{\varphi_F:V(F)\to[q]}
\prod_{uv\in E(F)}x_{\varphi_F(u)\varphi_F(v)}
\right)
\left(
q^{-|V(G)|}
\sum_{\varphi_G:V(G)\to[q]}
\prod_{uv\in E(G)}x_{\varphi_G(u)\varphi_G(v)}
\right)\\
&=\thetaq(F)\thetaq(G).
\end{align*}
Linearity gives multiplicativity for all quantum graphs $g,h\in\cA$, and the empty graph maps to $1$. Thus $\thetaq$ is a $\C$-algebra homomorphism.

Surjectivity follows from the Lov\'asz--Schrijver representation theorem for invariant homomorphism polynomials: the unnormalized homomorphism polynomials of graphs span $\Rq$ \cite{SCHRIJVER2009502, hombook}. Multiplying the polynomial corresponding to a graph $F$ by the nonzero scalar $q^{-|V(F)|}$ does not change the linear span. Hence the normalized image also spans $\Rq$.
\end{proof}

The map $\thetaq$ is not asserted to be an isomorphism or a ring-theoretic retraction. The quotient description $\cA/\ker\thetaq\cong\Rq$ is the correct algebraic relationship needed below.

\section{Varieties and Ideals}\label{section: Var and Ideals}

We now introduce the zero-set and ideal notation used throughout the rest of the paper. Fix once and for all an ambient class of kernels
\[
    \Omega\subseteq\cW.
\]
One may take $\Omega=\cW$, $\Omega=\cW_0$, the set of finite-rank kernels, or another chosen class. The ideal and topology statements below hold for any fixed choice of $\Omega$.

\begin{definition}\label{definition: kernel zero set}
For a subset $Q\subseteq\cA$, define the kernel zero-set
\[
    \V(Q)=\{W\in\Omega:t(g,W)=0\text{ for every }g\in Q\}.
\]
If $Q=\{g\}$, we write $\V(g)$ instead of $\V(\{g\})$. We call a subset $Y\subseteq\Omega$ \textit{algebraic} if $Y=\V(Q)$ for some $Q\subseteq\cA$.
\end{definition}

A graphon variety is obtained by taking $\Omega=\cW_0$ or by intersecting a kernel zero-set with $\cW_0$. We allow systems of equations by allowing $Q$ to have more than one element.

For reference, we also recall the usual notion of finite rank.

\begin{definition}\label{definition:finite rank}[\cite{hombook}]
For every $W\in\cW$, define the operator $T_W:L_1[0,1]\to L_\infty[0,1]$ by
\[
    (T_Wf)(x)=\int_0^1 W(x,y)f(y)\,dy.
\]
If this operator has finite-dimensional range, then we call $W$ a finite-rank kernel.
\end{definition}

\begin{definition}\label{definition: ideals}
For $Y\subseteq\Omega$, define the quantum-graph vanishing ideal
\[
    \IA(Y)=\{g\in\cA:t(g,W)=0\text{ for every }W\in Y\}.
\]
For fixed $q$, define the polynomial image ideal
\[
    \Iq(Y)=\thetaq(\IA(Y))\subseteq\Rq.
\]
For an ideal $\mathfrak a\subseteq\Rq$, define
\[
    \Vq(\mathfrak a):=\V(\thetaq^{-1}(\mathfrak a)).
\]
\end{definition}

\section{Results}\label{section: Res}

The main ideal statement rests on the multiplicativity of homomorphism density under disjoint union. This does not require a finite-rank hypothesis.

\begin{lemma}\label{lemma: multiplicativity}
For all $g,h\in\cA$ and all $W\in\cW$,
\[
    t(gh,W)=t(g,W)t(h,W).
\]
\end{lemma}

\begin{proof}
It is enough to prove the statement for ordinary multigraphs $F$ and $G$ and then extend bilinearly. Since $FG=F\sqcup G$,
\begin{align*}
 t(FG,W)
&=\int_{[0,1]^{V(F)\sqcup V(G)}}
\prod_{uv\in E(F)}W(x_u,x_v)
\prod_{uv\in E(G)}W(y_u,y_v)\,dx\,dy\\
&=\left(
\int_{[0,1]^{V(F)}}
\prod_{uv\in E(F)}W(x_u,x_v)\,dx
\right)
\left(
\int_{[0,1]^{V(G)}}
\prod_{uv\in E(G)}W(y_u,y_v)\,dy
\right)\\
&=t(F,W)t(G,W).
\end{align*}
The general case follows by bilinearity.
\end{proof}

\begin{theorem}\label{theorem: vanishing ideals are ideals}
For every $Y\subseteq\Omega$, the set $\IA(Y)$ is an ideal of $\cA$. Consequently, $\Iq(Y)$ is an ideal of $\Rq$.
\end{theorem}

\begin{proof}
Let $g\in\IA(Y)$ and $h\in\cA$. For every $W\in Y$, \cref{lemma: multiplicativity} gives
\[
    t(gh,W)=t(g,W)t(h,W)=0.
\]
Thus $gh\in\IA(Y)$, so $\IA(Y)$ is an ideal of $\cA$.

By \cref{proposition: theta homomorphism}, $\thetaq:\cA\to\Rq$ is a surjective algebra homomorphism. The image of an ideal under a surjective ring homomorphism is an ideal, so $\Iq(Y)=\thetaq(\IA(Y))$ is an ideal of $\Rq$.
\end{proof}

\begin{corollary}\label{cor: hom ideal}
Let
\[
    \Omega_m:=\Omega\cap\{W\in\cW: W\text{ has rank at most }m\}.
\]
Then $\IA(\Omega_m)$ and $\Iq(\Omega_m)$ are ideals.
\end{corollary}

\begin{proof}
This is the special case $Y=\Omega_m$ of \cref{theorem: vanishing ideals are ideals}.
\end{proof}

\begin{proposition}\label{proposition: zariski topology}
The algebraic subsets of $\Omega$ are closed under finite unions and arbitrary intersections. Moreover, $\varnothing$ and $\Omega$ are algebraic. Hence the complements of algebraic subsets define a topology on $\Omega$, called the Zariski topology associated to homomorphism-density zero-sets.
\end{proposition}

\begin{proof}
Let $Y_1=\V(Q_1)$ and $Y_2=\V(Q_2)$. Define
\[
    Q_1Q_2:=\{gh:g\in Q_1,\ h\in Q_2\}.
\]
We claim that
\[
    Y_1\cup Y_2=\V(Q_1Q_2).
\]
If $W\in Y_1\cup Y_2$, then either all elements of $Q_1$ vanish at $W$, or all elements of $Q_2$ vanish at $W$. Hence every product $gh$ with $g\in Q_1$ and $h\in Q_2$ vanishes at $W$.

Conversely, suppose $W\in\V(Q_1Q_2)$ and $W\notin Y_1$. Then there exists $g\in Q_1$ such that $t(g,W)\neq0$. For every $h\in Q_2$,
\[
    0=t(gh,W)=t(g,W)t(h,W),
\]
so $t(h,W)=0$. Hence $W\in Y_2$. This proves the finite-union claim.

For arbitrary intersections,
\[
    \bigcap_{\alpha}\V(Q_\alpha)=\V\!\left(\bigcup_{\alpha}Q_\alpha\right).
\]
Finally,
\[
    \varnothing=\V(\{K_0\})=\V(\{K_1\}),
\]
because $t(K_0,W)=t(K_1,W)=1$ for every kernel $W$, while
\[
    \Omega=\V(\varnothing)=\V(\{0\}).
\]
Thus algebraic sets satisfy the axioms for closed sets of a topology.
\end{proof}

\begin{definition}\label{definition: irreducible}
A nonempty subset $Y\subseteq\Omega$ is \textit{irreducible} if it cannot be expressed as the union $Y_1\cup Y_2$ of two proper subsets, each of which is closed in $Y$ in the relative Zariski topology. The empty set is not considered irreducible.
\end{definition}

\begin{proposition}\label{proposition: ideal variety properties}
The following properties hold.
\begin{enumerate}[(a)]
    \item If $A_1\subseteq A_2\subseteq\cA$, then
    \[
        \V(A_1)\supseteq\V(A_2).
    \]
    \item If $Y_1\subseteq Y_2\subseteq\Omega$, then
    \[
        \IA(Y_1)\supseteq\IA(Y_2),\qquad \Iq(Y_1)\supseteq\Iq(Y_2).
    \]
    \item For arbitrary $Y_1,Y_2\subseteq\Omega$,
    \[
        \IA(Y_1\cup Y_2)=\IA(Y_1)\cap\IA(Y_2).
    \]
    For polynomial image ideals, the automatic statement is
    \[
        \Iq(Y_1\cup Y_2)\subseteq\Iq(Y_1)\cap\Iq(Y_2).
    \]
    \item For every $Y\subseteq\Omega$,
    \[
        \V(\IA(Y))=\overline{Y},
    \]
    where the closure is taken in the Zariski topology of \cref{proposition: zariski topology}.
\end{enumerate}
\end{proposition}

\begin{proof}
Parts (a), (b), and the equality in (c) follow directly from the definitions. The inclusion for $\Iq$ in (c) follows by applying $\thetaq$ to the equality for $\IA$.

It remains to prove (d). First,
\[
    Y\subseteq\V(\IA(Y)),
\]
because every element of $\IA(Y)$ vanishes on every point of $Y$. Since $\V(\IA(Y))$ is closed,
\[
    \overline Y\subseteq\V(\IA(Y)).
\]
Conversely, let $C$ be any closed subset of $\Omega$ containing $Y$. Write $C=\V(Q)$ for some $Q\subseteq\cA$. Since $Y\subseteq C$, every element of $Q$ vanishes on $Y$, so $Q\subseteq\IA(Y)$. By part (a),
\[
    \V(\IA(Y))\subseteq\V(Q)=C.
\]
This holds for every closed $C\supseteq Y$, so $\V(\IA(Y))\subseteq\overline Y$. Therefore $\V(\IA(Y))=\overline Y$.
\end{proof}

\begin{proposition}\label{proposition: radical inclusion}
For every ideal $\mathfrak a\subseteq\Rq$,
\[
    \sqrt{\mathfrak a}\subseteq \Iq(\Vq(\mathfrak a)).
\]
\end{proposition}

\begin{proof}
Let $f\in\sqrt{\mathfrak a}$. Since $\thetaq:\cA\to\Rq$ is surjective, choose $g\in\cA$ such that $\thetaq(g)=f$. Since $f\in\sqrt{\mathfrak a}$, there exists $r>0$ such that $f^r\in\mathfrak a$. Because $\thetaq$ is a ring homomorphism,
\[
    \thetaq(g^r)=\thetaq(g)^r=f^r\in\mathfrak a.
\]
Thus $g^r\in\thetaq^{-1}(\mathfrak a)$.

If $W\in\Vq(\mathfrak a)=\V(\thetaq^{-1}(\mathfrak a))$, then
\[
    0=t(g^r,W)=t(g,W)^r,
\]
where the last equality is \cref{lemma: multiplicativity}. Therefore $t(g,W)=0$ for every $W\in\Vq(\mathfrak a)$, so $g\in\IA(\Vq(\mathfrak a))$. Applying $\thetaq$ gives
\[
    f=\thetaq(g)\in\Iq(\Vq(\mathfrak a)).
\]
Hence $\sqrt{\mathfrak a}\subseteq\Iq(\Vq(\mathfrak a))$.
\end{proof}

We restate Hilbert's Nullstellensatz in the form used below.

\begin{theorem}[Hilbert's Nullstellensatz \cite{hartshorne}]\label{theorem: classical HN}
Let $k$ be an algebraically closed field, let $\mathfrak a$ be an ideal in $A=k[x_1,\ldots,x_n]$, and let $f\in A$ vanish at all points of $Z(\mathfrak a)$. Then $f^r\in\mathfrak a$ for some integer $r>0$.
\end{theorem}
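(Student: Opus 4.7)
The plan is to prove the classical Hilbert's Nullstellensatz via the Rabinowitsch trick, reducing the strong form to the weak Nullstellensatz. First I would establish the weak form: if $\mathfrak{m}$ is a maximal ideal of $A = k[x_1,\ldots,x_n]$ with $k$ algebraically closed, then $\mathfrak{m} = (x_1 - a_1, \ldots, x_n - a_n)$ for some point $(a_1,\ldots,a_n) \in k^n$. Equivalently, any proper ideal $\mathfrak{b} \subsetneq A$ has a zero in $k^n$. This rests on Zariski's lemma (or alternatively Noether normalization): if $L/k$ is a field extension that is finitely generated as a $k$-algebra, then $L/k$ is algebraic, hence $L = k$ when $k$ is algebraically closed. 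Applying this to $A/\mathfrak{m}$, the residue classes of $x_i$ give the coordinates $a_i$ of the required zero.

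Next I would carry out the Rabinowitsch trick. Suppose $f \in A$ vanishes on $Z(\mathfrak{a})$. Introduce a new variable $y$ and consider the ideal
\begin{equation*}
    \mathfrak{a}' = (\mathfrak{a}, 1 - yf) \subseteq A' := k[x_1,\ldots,x_n, y].
\end{equation*}
By construction $Z(\mathfrak{a}') = \emptyset$ in $k^{n+1}$: any common zero of $\mathfrak{a}$ has $f = 0$, so the polynomial $1 - yf$ evaluates to $1 \neq 0$ there, whereas any zero of $1 - yf$ has $f \neq 0$ and thus cannot lie in $Z(\mathfrak{a})$. The weak Nullstellensatz now forces $\mathfrak{a}' = A'$, so we can write
\begin{equation*}
    1 = \sum_{i=1}^s g_i(x,y)\, h_i(x) + g_0(x, y)\bigl(1 - y f(x)\bigr),
\end{equation*}
where $h_i \in \mathfrak{a}$ and $g_i \in A'$.

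Finally I would substitute $y = 1/f$ in the field of fractions $k(x_1,\ldots,x_n)$: the last term drops out, yielding $1 = \sum_i g_i(x, 1/f)\, h_i(x)$. Let $r$ be the maximum $y$-degree of the $g_i$. Multiplying through by $f^r$ clears all denominators on the right and produces
\begin{equation*}
    f^r = \sum_{i=1}^s \widetilde{g}_i(x)\, h_i(x),
\end{equation*}
with each $\widetilde g_i \in A$. The right-hand side lies in $\mathfrak{a}$, which gives $f^r \in \mathfrak{a}$ as required.

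The main obstacle is the weak Nullstellensatz, specifically Zariski's lemma, which is the only genuinely nontrivial input; once that is in hand, Rabinowitsch's trick together with the geometric observation $Z(\mathfrak{a}') = \emptyset$ makes the passage from weak to strong essentially formal. The other technical subtlety is the clearing-of-denominators step, where one must make sure the exponent $r$ arises uniformly from the $y$-degrees of the $g_i$, but this is routine bookkeeping in $k[x_1,\ldots,x_n]$.
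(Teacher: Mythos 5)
Your proof is correct and is the standard Rabinowitsch-trick argument, reducing the strong Nullstellensatz to the weak form via the auxiliary ideal $(\mathfrak{a}, 1-yf)$ in $k[x_1,\ldots,x_n,y]$, with Zariski's lemma underlying the weak form. Note, though, that the paper does not prove this theorem at all: it is stated as background and cited directly to Hartshorne, so there is no proof in the paper to compare against. Your write-up is a faithful account of the classical argument and identifies the genuinely nontrivial input (Zariski's lemma) and the routine bookkeeping (clearing denominators by taking $r$ to be the maximal $y$-degree of the $g_i$) correctly.
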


The next theorem is the invariant-ring form needed for homomorphism-density polynomials. The point is that an ideal of $\Rq$ should not be treated as an ideal of the full polynomial ring $\Sq$; instead, one extends the ideal to $\Sq$ and contracts back to $\Rq$.

\begin{theorem}\label{HNAK1}
Let $\mathfrak a\subseteq\Rq$ be an ideal. Let $f\in\Rq$ vanish on
\[
    Z(\mathfrak a)=\{X\in\C^{q(q+1)/2}:h(X)=0\text{ for every }h\in\mathfrak a\}.
\]
Then
\[
    f\in\sqrt{\mathfrak a}.
\]
Equivalently, $f^r\in\mathfrak a$ for some integer $r>0$.
\end{theorem}

\begin{proof}
Recall that $\Sq=\C[x_{ij}:1\leq i\leq j\leq q]$ and $\Rq=\Sq^{\Sym_q}$. The finite group $\Sym_q$ acts on $\Sq$, and because we are working over $\C$, the Reynolds operator
\[
    \rho(s)=\frac{1}{|\Sym_q|}\sum_{\sigma\in\Sym_q}\sigma(s)
\]
is an $\Rq$-linear projection $\Sq\to\Rq$ whose restriction to $\Rq$ is the identity. In particular, for every ideal $\mathfrak a\subseteq\Rq$,
\[
    \mathfrak a\Sq\cap\Rq=\mathfrak a.
\]
Indeed, if $u\in\mathfrak a\Sq\cap\Rq$ and $u=\sum_i a_i s_i$ with $a_i\in\mathfrak a$ and $s_i\in\Sq$, then
\[
    u=\rho(u)=\sum_i a_i\rho(s_i)\in\mathfrak a.
\]
The reverse inclusion is immediate.

The zero set $Z(\mathfrak a)$ is the zero set in $\C^{q(q+1)/2}$ of the extended ideal $\mathfrak a\Sq\subseteq\Sq$. Since $f$ vanishes on this zero set, the classical Nullstellensatz in the full polynomial ring $\Sq$ gives
\[
    f\in\sqrt{\mathfrak a\Sq}.
\]
Since $f\in\Rq$, there exists $r>0$ such that
\[
    f^r\in\mathfrak a\Sq\cap\Rq=\mathfrak a.
\]
Thus $f\in\sqrt{\mathfrak a}$.
\end{proof}

\begin{theorem}\label{HNAK2}
Let $\mathfrak a\subseteq\Rq$ be an ideal and let $g\in\cA$. Suppose $\thetaq(g)$ vanishes on $Z(\mathfrak a)$. Let
\[
    V:=\Vq(\mathfrak a)=\V(\thetaq^{-1}(\mathfrak a)).
\]
Then
\[
    g\in\IA(V).
\]
Equivalently,
\[
    \thetaq(g)\in\Iq(V).
\]
\end{theorem}

\begin{proof}
By \cref{HNAK1}, $\thetaq(g)\in\sqrt{\mathfrak a}$. Hence there exists $r>0$ such that
\[
    \thetaq(g)^r\in\mathfrak a.
\]
Since $\thetaq$ is a ring homomorphism,
\[
    \thetaq(g^r)=\thetaq(g)^r\in\mathfrak a.
\]
Thus $g^r\in\thetaq^{-1}(\mathfrak a)$. If $W\in V=\V(\thetaq^{-1}(\mathfrak a))$, then $t(g^r,W)=0$. By \cref{lemma: multiplicativity},
\[
    t(g^r,W)=t(g,W)^r.
\]
Therefore $t(g,W)=0$. Hence $g\in\IA(V)$, and applying $\thetaq$ gives $\thetaq(g)\in\Iq(V)$.
\end{proof}

\begin{corollary}\label{corollary: irreducible prime}
Let $V\subseteq\Omega$ be a nonempty algebraic kernel set. Then
\[
    V\text{ is irreducible}\quad\Longleftrightarrow\quad \IA(V)\text{ is a prime ideal of }\cA.
\]
\end{corollary}

\begin{proof}
Suppose $V$ is irreducible and $gh\in\IA(V)$. Then for every $W\in V$,
\[
    0=t(gh,W)=t(g,W)t(h,W).
\]
Therefore
\[
    V\subseteq\V(g)\cup\V(h).
\]
By irreducibility, $V\subseteq\V(g)$ or $V\subseteq\V(h)$. Hence $g\in\IA(V)$ or $h\in\IA(V)$, so $\IA(V)$ is prime.

Conversely, suppose $\IA(V)$ is prime and $V=V_1\cup V_2$, where $V_1$ and $V_2$ are closed in $V$. Write $V_i=V\cap\V(Q_i)$. If both $V_i$ are proper subsets of $V$, then choose $g_i\in Q_i$ such that $g_i\notin\IA(V)$. Each $g_i$ vanishes on $V_i$, so $g_1g_2$ vanishes on $V_1\cup V_2=V$. Thus $g_1g_2\in\IA(V)$, contradicting primeness. Hence one of $V_1,V_2$ equals $V$, and $V$ is irreducible.
\end{proof}

The criterion in \cref{corollary: irreducible prime} is stated using the quantum-graph ideal $\IA(V)$. A criterion using $\Iq(V)$ requires additional hypotheses, because $\thetaq$ need not be injective.

\section{Hadamard Matrices}\label{section: Had}

\begin{definition}[\cite{ferber2018number}]
A \textit{Hadamard matrix of order $n$} is an $n\times n$ matrix $H$ whose entries are $\{\pm1\}$ and whose rows are pairwise orthogonal, i.e. $HH^T=nI_n$. Hadamard matrices are named after Jacques Hadamard, who studied them in connection with the maximal determinant problem.
\end{definition}

Hadamard matrices are widely studied. While we acknowledge the broad impacts of their applications, we will not touch on this subject. For greater discussion of their applications and Hadamard's studies, see \cite{ferber2018number, lofano2021hadamard, banica2021complex, breen2020hadamard, li2020note}. Throughout this section, we use examples of $2\times2$ Hadamard matrices to illustrate the preceding concepts.

\begin{definition}\label{definition:stepfunction}
A kernel $W\in\cW$ is called a \textit{stepfunction} if:
\begin{enumerate}[(i)]
    \item there is a partition $\{T_1,\ldots,T_k\}$ of $[0,1]$ into measurable sets;
    \item $W$ is constant on every product set $T_i\times T_j$.
\end{enumerate}
We call the sets $T_i$ the \textit{steps} of $W$.
\end{definition}

If $P=(p_{ij})$ is a symmetric $n\times n$ matrix, we construct an equal-step kernel $W_P$ by setting
\[
    T_i=\left[\frac{i-1}{n},\frac{i}{n}\right)
\]
for $1\leq i<n$, and $T_n=[(n-1)/n,1]$. Then define $W_P(x,y)=p_{ij}$ for $x\in T_i$ and $y\in T_j$.

\begin{definition}\label{definition:hadamard graphons}[\cite[Example 14.35, p. 251]{hombook}]
A symmetric $n\times n$ Hadamard matrix $B$ gives rise to a stepfunction kernel $W_B$. Define
\[
    U_B=\frac{W_B+1}{2}.
\]
Then $U_B$ is a graphon, called the \textit{Hadamard graphon} associated to $B$.
\end{definition}

\begin{example}\label{example: standard H}
Let
\[
    H=\begin{pmatrix}1&1\\1&-1\end{pmatrix}.
\]
The corresponding stepfunction kernel $W_H:[0,1]^2\to\R$ is
\[
W_H(x,y)=
\begin{cases}
1, & x,y\in [0,\frac12),\\
1, & x\in [0,\frac12),\ y\in [\frac12,1],\\
1, & x\in [\frac12,1],\ y\in [0,\frac12),\\
-1, & x,y\in [\frac12,1].
\end{cases}
\]
The associated Hadamard graphon is
\[
U_H(x,y)=\frac{1+W_H(x,y)}{2}=
\begin{cases}
1, & x,y\in [0,\frac12),\\
1, & x\in [0,\frac12),\ y\in [\frac12,1],\\
1, & x\in [\frac12,1],\ y\in [0,\frac12),\\
0, & x,y\in [\frac12,1].
\end{cases}
\]
\end{example}

The set of all Hadamard graphons together with the constant $1/2$ graphon forms a simple variety by \cite{hombook}.

\begin{definition}\label{def: hadamard probability}
Let $B=(b_{ij})$ be a symmetric $n\times n$ Hadamard matrix and let $F$ be a loopless multigraph. Define
\[
P(B,F)=
\frac{1}{n^{|V(F)|}}
\#\left\{
\varphi:V(F)\to[n]:
 b_{\varphi(u)\varphi(v)}=1\text{ for every }uv\in E(F)
\right\}.
\]
For a quantum graph $g=\sum_i\alpha_iF_i$, define
\[
    P(B,g):=\sum_i\alpha_iP(B,F_i).
\]
\end{definition}

\begin{proposition}\label{proposition: hadamard formula}
For every symmetric Hadamard matrix $B$ and every quantum graph $g\in\cA$,
\[
    t(g,U_B)=P(B,g).
\]
\end{proposition}

\begin{proof}
It is enough to prove the statement for an ordinary multigraph $F$ and then extend linearly. On each block $T_i\times T_j$,
\[
U_B(x,y)=
\begin{cases}
1, & b_{ij}=1,\\
0, & b_{ij}=-1.
\end{cases}
\]
For a map $\varphi:V(F)\to[n]$, the corresponding block assignment contributes $1$ to
\[
    \prod_{uv\in E(F)}U_B(x_u,x_v)
\]
exactly when $b_{\varphi(u)\varphi(v)}=1$ for every edge $uv\in E(F)$; otherwise it contributes $0$. Each block assignment has measure $n^{-|V(F)|}$. Therefore
\[
    t(F,U_B)=P(B,F).
\]
Linearity gives $t(g,U_B)=P(B,g)$.
\end{proof}

\begin{example}\label{example: triangle hadamard}
Let
\[
B=\begin{pmatrix}
1&-1\\
-1&-1
\end{pmatrix}.
\]
This is a symmetric Hadamard matrix. The graphon $U_B$ equals $1$ only on the block $T_1\times T_1$, where $T_1=[0,1/2)$, and equals $0$ elsewhere. For the triangle $K_3$, a nonzero contribution occurs exactly when all three vertices land in $T_1$. Hence
\[
    t(K_3,U_B)=\mu(T_1)^3=\left(\frac12\right)^3=\frac18.
\]
Equivalently, $P(B,K_3)=1/8$.
\end{example}

\begin{proposition}\label{proposition: positive diagonal}
If a symmetric Hadamard matrix $B$ has a $+1$ entry on the diagonal, then
\[
    t(F,U_B)>0
\]
for every ordinary multigraph $F$. Consequently, for every nonzero scalar $\alpha\in\C$,
\[
    t(\alpha F,U_B)=\alpha t(F,U_B)\neq0.
\]
\end{proposition}

\begin{proof}
Suppose $b_{ii}=1$. In the integral defining $t(F,U_B)$, restrict to the region where every vertex variable lies in the step $T_i$. On this region every edge factor equals $1$, and the region has positive measure $n^{-|V(F)|}$. Since $U_B$ is $0/1$-valued, the rest of the integral is nonnegative. Hence $t(F,U_B)>0$.
\end{proof}

\begin{theorem}\label{theorem: hadamard ideal}
Let $B$ be a symmetric Hadamard matrix and $U_B$ its Hadamard graphon. Then
\[
    \IA(\{U_B\})=
    \{g\in\cA:P(B,g)=0\}.
\]
\end{theorem}

\begin{proof}
By \cref{proposition: hadamard formula}, $t(g,U_B)=P(B,g)$ for every $g\in\cA$. Hence
\[
    g\in\IA(\{U_B\})
\]
if and only if $t(g,U_B)=0$, if and only if $P(B,g)=0$.
\end{proof}

\section{Future Work}\label{section: Fut}

The ideal and topology results above do not require finite rank; they hold for any fixed ambient class $\Omega\subseteq\cW$. This suggests that a useful next direction is not to extend the ideal theorem beyond finite rank, but rather to compare how different choices of $\Omega$ change the resulting Zariski topology and vanishing ideals.

The finite-$q$ polynomial quotient also leaves natural questions. Since $\thetaq$ need not be injective, equality after applying $\thetaq$ does not necessarily imply equality of kernel zero-sets. For example, when $q=1$,
\[
    \theta_1(K_2\sqcup K_2)=x^2,
    \qquad
    \theta_1(P_3)=x^2,
\]
where $P_3$ is the path on three vertices. Thus
\[
    D:=K_2\sqcup K_2-P_3
\]
lies in $\ker\theta_1$. But for the rank-one kernel $W(x,y)=xy$,
\[
    t(K_2\sqcup K_2,W)=t(K_2,W)^2=\left(\frac14\right)^2=\frac1{16},
\]
whereas
\[
    t(P_3,W)=\int_{[0,1]^3}xy\cdot yz\,dx\,dy\,dz=\frac1{12}.
\]
Hence
\[
    t(D,W)=\frac1{16}-\frac1{12}=-\frac1{48}\neq0.
\]
So fixed-$q$ polynomial data alone cannot determine kernel zero-sets in general.

\begin{question}
Characterize when two quantum-graph sets $Q_1,Q_2\subseteq\cA$ define the same kernel zero-set
\[
    \V(Q_1)=\V(Q_2),
\]
and compare this equivalence relation with the finite-$q$ quotient relations induced by $\ker\thetaq$.
\end{question}

\begin{question}
For a fixed ambient class $\Omega$, what additional hypotheses on an algebraic kernel set $V\subseteq\Omega$ guarantee that the polynomial image ideal $\Iq(V)$ retains enough information to characterize irreducibility or other geometric properties of $V$?
\end{question}

\printbibliography

\end{document}